\newtheorem{theorem}{Theorem}[section]
\newtheorem{example}[theorem]{Example}
\newcommand{\rnc}[2]{\renewcommand{#1}{#2}}
\rnc{\theequation}{\thesection.\arabic{equation}}
\newcommand{\si}{\psi}
\newcommand{\ot}{\otimes}
\begin{document}

\title{A note on Hopf Cyclic Cohomology in Non-symmetric Monoidal Categories} 

\author{Arash Pourkia\\ {\small Huron University College, Western University,}\\ {\small Department of Mathematics, Western University,}\\ {\small London, Ontario, Canada}} 

\date{July 2014}

\maketitle

\begin{abstract}
In our previous work, ``Hopf cyclic cohomology in braided monoidal categories", we extended the formalism of Connes and Moscovici's Hopf cyclic cohomology and the more general case of Hopf cyclic cohomology with coefficients to the context of abelian braided monoidal categories. In this paper we go one step further in reducing the restriction of the ambient  category $\mathcal{C}$ being symmetric. We let  $\mathcal{C}$ to be non-symmetric but assume only the restriction on the braid map $\psi_{H \otimes H}$ for the Hopf algebra $H$ in $\mathcal{C}$ which is the main player in the theory.  In the case of Hopf cyclic cohomology with coefficients, i.e., in general for the triple $(H,C,M)$, we also need the restriction on $\psi_{H \otimes M}$, $\psi_{H \otimes C}$ and $\psi_{M \otimes C}$. We present a family of examples of non-symmetric categories in which many objects with the property $\psi^2=id$ exist (anyonic vector spaces).
\end{abstract}

\section{Introduction} 
In ~\cite{cm2,cm3,cm4},  Connes and Moscovici, motivated by 
transverse index theory for foliations, defined a cohomology theory
of cyclic type for Hopf algebras endowed with a modular pair in involution (MPI). This theory was later extended in ~\cite{hkrs1,hkrs2} to the more general case of Hopf cyclic cohomology with coefficients, by introducing the notion of stable anti Yetter-Drinfeld (SAYD) modules.

In \cite{pl1hha} we extended all these formalisms of Hopf cyclic cohomology
to the context of abelian braided monoidal categories. When the braiding is symmetric we associated a cocyclic object to a braided Hopf algebra endowed with a braided
modular pair in involution. When the braiding is non-symmetric we obtained a para-cocyclic object instead of a cocyclic object. To obtain a cocyclic object from a para-cocyclic object one has to work in an appropriate subspace.

In the present paper we reduce the restriction of the ambient  category $\mathcal{C}$ being symmetric, i.e., $\psi_{A \otimes B}\psi_{B \otimes A}=id_{B \otimes A}$, for all $A$ and $B$ in $\mathcal{C}$, to a less restrictive condition.  We show that even in the non-symmetric case as long as we only have the relation $\psi^2_{H \otimes H}=\psi_{H \otimes H}\psi_{H \otimes H}=id_{H \otimes H}$ we will still obtain a cocyclic object. In the case of Hopf cyclic cohomology with coefficients in a $H$-module $M$ we also need to have the relation $\psi_{H \otimes M}\psi_{M \otimes H}=id_{M \otimes H}$. In the most general case, for the triple $(H,C,M)$, we also need similar restrictions on $\psi_{H \otimes C}$ and $\psi_{M \otimes C}$. This is in fact because the main players in the theory are $H$, $M$ and $C$ and nowhere in any of the proofs in \cite{pl1hha} we need more than the above relations. Therefore the proofs of the two theorems, Theorems \ref{BHCfortriplesinnonsymm} and \ref{brdd ver of CM thry in nonsymmetric}, presented here are the same as the proofs of their analogous theorems, Theorems 3.6 and 7.1 in \cite{pl1hha}.

In the last section of this paper we provide a family of examples of such a situation where the ambient category $\mathcal{C}$ (category of anyonic vector spaces) is not symmetric but for many objects, $A$ and $B$ in $\mathcal{C}$ the property $\psi_{A \otimes B}\psi_{B \otimes A}=id_{B \otimes A}$ holds. 

We close the present paper by looking at a particular example of a braided Hopf algebra (in a non-symmetric category) for which $\psi^2=id$. We will prove that in this particular case the braided Hopf cyclic cohomology (defied in Theorem \ref{brdd ver of CM thry in nonsymmetric}) coincides with the (usual) Hopf cyclic cohomology in the sense of Connes and Moscovici \cite{cm2,cm3,cm4}. This is expected because this braided Hopf algebra is also a Hopf algebra in the usual sense, i.e., within  the category of vector spaces. \newline

To keep this note short we don't include any preliminaries. We refer the reader to \cite{pl1hha,l,ml} for all the preliminary definitions and for our convention of the short notations often used in this paper.  \newline

\thanks{The author would like to express his sincere appreciation to Masoud Khalkhali for illuminating discussions and encouragements.}

\section{Hopf cyclic cohomology in non-symmetric categories} \label{hopf cyclc for H C M}
In this section we first present an analogous of Theorem 3.6 in \cite{pl1hha}. We assign a cocyclic object to a braided triple ($H$,\,$C$,\,$M$) in an abelian braided monoidal category $\mathcal{C}$ but with the symmetric condition for $\mathcal{C}$ replaced by only restrictions on $\psi_{H \otimes M}$, $\psi_{H \otimes H}$, $\psi_{H \otimes C}$ and $\psi_{M \otimes C}$ (Theorem \ref{BHCfortriplesinnonsymm}). Then we let $C=H$ and $M=I$, the identity object of $\mathcal{C}$, and present an analogous of Theorem 3.6 in \cite{pl1hha}, but again with the symmetric condition for $\mathcal{C}$ replaced, in this case, by only one restriction on $\psi_{H \otimes H}$ (Theorem  \ref{brdd ver of CM thry in nonsymmetric}). The latter case is in fact the braided version of Connes-Moscovici's Hopf cyclic cohomology in a non-symmetric category $\mathcal{C}$ in which only the relation $\psi^2_{H \otimes H}=id_H$ holds. \newline

The proofs of the two theorems presented here are the same as the proofs of their analogous theorems in \cite{pl1hha}.

\begin{theorem} \label{BHCfortriplesinnonsymm}
Let $\mathcal{C}$ be an abelian braided monoidal category  which is {\bf not necessarily symmetric}. Consider the triple $(H,C,M)$, where $H$ is a Hopf algebra, $C$ is a $H$-module coalgebra and $M$ is a SAYD $H$-module, all in $\mathcal{C}$ for which the following relations hold,  
\[\psi_{H \otimes M}\psi_{M \otimes H}=id_{M \otimes H},\qquad \psi_{H \otimes H}\psi_{H \otimes H}=id_{H \otimes H}, \nonumber \]
\[\psi_{H \otimes C}\psi_{C \otimes H}=id_{C \otimes H},\qquad \psi_{M \otimes C}\psi_{C \otimes M}=id_{C \otimes M}. \nonumber \]
Then  $( C^\bullet_H ,\widetilde{\delta_i}, \widetilde{\sigma_i}, \widetilde{\tau_n})$, defined as follows, is a cocyclic object  in $\mathcal{C}$.
For $n\geq 0$, we let
\[ C^n=C^n(C,M) := M \ot C^{n+1}. \nonumber \]
We define faces $\delta_i:C^{n-1} \rightarrow C^n$, degeneracies $\sigma_i
:C^{n+1} \rightarrow C^n$ and cyclic maps $\tau_n :C^n \to C^n$ by:
\begin{eqnarray*}
\delta_i &=& \begin{cases}
 (1_M,1_{C^i}, \Delta_C, 1_{C^{n-i-1}}) \quad \quad \quad  \quad \quad \textrm{$0 \leq i < n$}\\
 (1_M , \si_{C , C^n})(1_M , \phi_C , 1_{C^n})(\si_{H,M} , 1_{C^{n+1}})(\rho _M , \Delta_C , 1_{C^{n-1}}) & \textrm{$i=n$}
  \end{cases}\\
\sigma_i &=& (1_M , 1_{C^{i+1}} , \varepsilon_C , 1_{C^{n-i}}), \quad \quad \quad \quad \quad  \textrm{$0 \leq i \leq n $}\\
 \tau_n &=& (1_M , \si_{C ,C^n})(1_M , \phi_C , 1_{C^n})(\si_{H,M} , 1_{C^{n+1}})(\rho _M ,1_{C^{n+1}})
\end{eqnarray*}

We form the balanced tensor products (Here is when we need $\mathcal{C}$ to be abelian), 
\[ C^n_H=C^n_H(C,M) := M \ot_H C^{n+1}, \quad \quad n \geq 0, \nonumber  \]
 with induced faces, degeneracies and cyclic maps denoted by $\widetilde{\delta_i}$, $\widetilde{\sigma_i}$ and $\widetilde{\tau_n}$.
\end{theorem}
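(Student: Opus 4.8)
The plan is to follow, essentially verbatim, the proof of the analogous Theorem~3.6 in \cite{pl1hha}, keeping careful track of \emph{where} the symmetry of $\mathcal{C}$ is actually invoked, and checking that in every such place the braiding being squared to the identity is one of the four appearing in the hypotheses, namely $\psi_{H \otimes M}$, $\psi_{H \otimes H}$, $\psi_{H \otimes C}$, $\psi_{M \otimes C}$. First I would record the ingredients: the operators $\delta_i,\sigma_i,\tau_n$ are assembled only from $\Delta_C$, $\varepsilon_C$, the $H$-action $\phi_C$ on $C$, the coaction $\rho_M$ and the $H$-action on $M$, the diagonal $H$-action on $C^{\otimes(n+1)}$ built from $\Delta_H$, and braidings between $H$, $C$, $M$ and their tensor powers. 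By bifunctoriality of $\psi$ and the hexagon axioms, every braiding involving a tensor power $C^{\otimes k}$ decomposes into a composite of braidings with a single copy of $C$; this is precisely why no hypothesis on $\psi_{C \otimes C}$ is required.

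Next I would check that $\delta_i$, $\sigma_i$ and $\tau_n$ descend to the balanced tensor products $C^n_H = M \otimes_H C^{\otimes(n+1)}$ (whose existence as coequalizers is where abelianness of $\mathcal{C}$ is used). This amounts to the appropriate $H$-linearity of these maps, which follows from naturality of $\psi$, the module-coalgebra axioms for $C$ and the $H$-module comodule structure of $M$, exactly as in \cite{pl1hha}; no involutivity enters. I would then transcribe the verification of the cosimplicial identities among the $\widetilde{\delta_i}$ and $\widetilde{\sigma_i}$, together with the mixed cyclic-compatibility identities $\widetilde{\tau_n}\widetilde{\delta_i} = \widetilde{\delta_{i-1}}\widetilde{\tau_{n-1}}$ for $1 \le i \le n$, $\widetilde{\tau_n}\widetilde{\delta_0} = \widetilde{\delta_n}$, $\widetilde{\tau_n}\widetilde{\sigma_i} = \widetilde{\sigma_{i-1}}\widetilde{\tau_{n+1}}$ for $1 \le i \le n$, and $\widetilde{\tau_n}\widetilde{\sigma_0} = \widetilde{\sigma_n}\widetilde{\tau_{n+1}}^{2}$. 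These are exactly the para-cocyclic identities established in \cite{pl1hha}; they hold for any braided triple and rely only on coassociativity, counitality, the module-coalgebra axioms, the Yetter--Drinfeld compatibility of $M$, naturality and the hexagon axioms, so nothing about them changes in the non-symmetric setting.

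The crux, and the only place the four hypotheses genuinely intervene, is the cyclicity relation $\widetilde{\tau_n}^{\,n+1} = \mathrm{id}_{C^n_H}$. Here I would expand $\widetilde{\tau_n}^{\,n+1}$ as an explicit composite on $M \otimes_H C^{\otimes(n+1)}$ and "comb" all the braidings to one side, using naturality and the hexagon axioms together with the antipode, module and comodule axioms, following the computation in \cite{pl1hha}. In the symmetric case this manipulation annihilates the braiding part outright and leaves the stability operator of $M$, which is the identity by the SAYD assumption. In the present situation the same manipulation instead leaves a composite of the operators $\psi_{H \otimes H}\psi_{H \otimes H}$, $\psi_{H \otimes M}\psi_{M \otimes H}$, $\psi_{H \otimes C}\psi_{C \otimes H}$ and $\psi_{M \otimes C}\psi_{C \otimes M}$ (each applied, with suitable multiplicity, on the appropriate tensorands) followed by the stability operator; every one of these factors is the identity by hypothesis, and stability of $M$ then completes the argument.

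I expect this final step to be the main obstacle: one must verify that, after combing, the residual "monodromy" automorphism really is \emph{exactly} a product of these four involutivity operators and of the stability operator, with no other braiding surviving — in particular that $\psi_{C\otimes C}$ and any mixed braiding reorganize so that whenever a braiding returns to its original tensorands it appears squared as one of the four listed combinations. This is the one point where the \cite{pl1hha} argument has to be re-read line by line rather than merely cited, and it is what the statement of the theorem is, in effect, extracting from that argument.
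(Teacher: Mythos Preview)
Your proposal is correct and matches the paper's own proof, which consists solely of the sentence ``As in the proof of Theorem~3.6 in \cite{pl1hha}'' together with the observation (in the introduction) that nowhere in that proof is more than the four listed involutivity relations used. You have simply made explicit what the paper leaves implicit, namely where in the \cite{pl1hha} argument those relations enter (the cyclicity identity $\widetilde{\tau_n}^{\,n+1}=\mathrm{id}$) and why no $\psi_{C\otimes C}$ hypothesis is needed.
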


\begin{proof}
As in the proof of Theorem 3.6 in \cite{pl1hha}.
\end{proof}

Now if we put $M=I$ and $C=H$ Theorem \ref{BHCfortriplesinnonsymm} reduces the braided version of Connes-Moscovici's Hopf cyclic theory \cite{cm2,cm3,cm4}, in non-symmetric monoidal categories, as follows. 

\begin{theorem} \label{brdd ver of CM thry in nonsymmetric} Let $\mathcal{C}$ be an abelian braided (not necessarily symmetric) monoidal category. Let $H$  be a braided Hopf algebra in $\mathcal{C}$ for which, $\psi_{H \otimes H}\psi_{H \otimes H}=id_{H \otimes H}$ (for short we show this by $\psi^2_{H \otimes H}=id$). Then if $H$ is endowed with a BMPI $(\delta, \sigma)$ the following data defines a cocyclic object in $\mathcal{C}$:
 \[C^0(H)=I~~~and~~~C^n(H) = H^n, \,  ~~  n \geq 1, \nonumber  \]
\[
\delta_i = \left\{ \begin{array}{ll}
 (\eta, 1,1,...,1) & \textrm{ $i=0$}\\
 (1,1,...,1, \underset{ i-th}{\Delta}, 1,1,...1) & \textrm{$1 \leq i \leq n-1$}\\
 (1,1,...,1,\sigma) & \textrm{ $i=n$}
  \end{array} \right.
 \nonumber\]
\[\sigma_i =(1,1,..., \underset{(i+1)-th}{\varepsilon}, 1,1...,1), \,~~0 \leq i \leq n~~\label{degen 0 2}   \nonumber\]
\[
\tau_n = \left\{ \begin{array}{ll}
 id_I & \textrm{ $n=0$}\\
 (m_n)(\Delta^{n-1} \widetilde{S} , 1_{H^{n-1}} , \sigma) & \textrm{ $n \neq 0$}
  \end{array} \right.
 \nonumber\]
Here by $m_n$ we mean, $m_1=m$, and for $n \geq 2$:
\[m_n =m_{H^n} = (\underbrace{m,m,...,m}_{n~times})\mathcal{F}_n(\si), \nonumber \] where
\[\mathcal{F}_n(\si):=\prod_{j=1}^{n-1}~(1_{H^j}, \underbrace{\si, \si,...,\si}_{n-j~times},1_{H^j}). \nonumber\]
\end{theorem}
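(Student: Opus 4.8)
The plan is to obtain Theorem~\ref{brdd ver of CM thry in nonsymmetric} as a direct specialization of Theorem~\ref{BHCfortriplesinnonsymm} by setting $M = I$, the identity object, and $C = H$, the Hopf algebra itself viewed as a module coalgebra over itself via multiplication. First I would check that the hypotheses of Theorem~\ref{BHCfortriplesinnonsymm} degenerate correctly: with $M = I$ we have $\psi_{H \otimes I} = \psi_{I \otimes H} = id$ by the coherence (unit) axioms of a braided monoidal category, so the relations $\psi_{H \otimes M}\psi_{M \otimes H} = id_{M \otimes H}$ and $\psi_{M \otimes C}\psi_{C \otimes M} = id_{C \otimes M}$ hold automatically; with $C = H$ the relation $\psi_{H \otimes C}\psi_{C \otimes H} = id_{C \otimes H}$ becomes exactly $\psi_{H \otimes H}\psi_{H \otimes H} = id_{H \otimes H}$; and the remaining relation $\psi_{H \otimes H}\psi_{H \otimes H} = id_{H \otimes H}$ is the single hypothesis we have assumed. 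So the only hypothesis of the general theorem that survives is $\psi^2_{H \otimes H} = id$, as claimed.

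Next I would identify the objects and structure maps. With $M = I$ we get $M \otimes_H H^{n+1} \cong I \otimes H^{n+1}/(\text{action}) \cong H^n$ after using the unit isomorphism and the fact that the $H$-action on $I$ is via the counit (a BMPI amounts to a character-group-like pair, so $I$ becomes an $H$-module via $\delta$); here the balanced tensor product over $H$ collapses one tensor factor, giving $C^n_H(H, I) \cong H^n$ for $n \geq 1$ and $\cong I$ for $n = 0$. Then I would trace through each of the induced structure maps $\widetilde{\delta_i}$, $\widetilde{\sigma_i}$, $\widetilde{\tau_n}$ from the general formulas in Theorem~\ref{BHCfortriplesinnonsymm}: the face $\delta_i = (1_M, 1_{C^i}, \Delta_C, 1_{C^{n-i-1}})$ for $0 \le i < n$ should, after the identifications, become the comultiplications of $H$ in the $i$-th slot (shifted appropriately, with the $i = 0$ face absorbing the unit $\eta$ coming from the collapsed $M = I$ factor), and the last face $\delta_n$ — which in the general theorem involves $\rho_M$, $\si_{H,M}$, $\phi_C$, $\si_{C,C^n}$ — should collapse, using $\rho_I = \delta$ and the triviality of braidings with $I$, to $(1, \dots, 1, \sigma)$. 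Similarly $\sigma_i = (1_M, 1_{C^{i+1}}, \varepsilon_C, 1_{C^{n-i}})$ becomes the counit in the $(i+1)$-st slot, and the cyclic operator $\tau_n = (1_M, \si_{C,C^n})(1_M, \phi_C, 1_{C^n})(\si_{H,M}, 1_{C^{n+1}})(\rho_M, 1_{C^{n+1}})$ — with $\phi_C$ now the multiplication action of $H$ on itself, $\rho_M = \delta$, and the braidings with $I$ trivial — should, after reorganizing the iterated braidings into the factor $\mathcal{F}_n(\si)$ and folding the coproduct and antipode against the action, reduce to $(m_n)(\Delta^{n-1}\widetilde{S}, 1_{H^{n-1}}, \sigma)$. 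This last identification is the combinatorially heaviest part: one must verify that the braided coproduct of $C = H$ fed through the $H$-action $m$, together with the repeated $\si$'s that arise from moving tensor factors past each other, assembles precisely into $m_n = (m, \dots, m)\mathcal{F}_n(\si)$.

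The main obstacle I expect is precisely this bookkeeping in the cyclic operator: keeping track of exactly which braidings $\si_{H \otimes H}$ appear and in which order when one unwinds $\si_{C, C^n}$ for $C = H$ and compares with the prescribed $\mathcal{F}_n(\si) = \prod_{j=1}^{n-1}(1_{H^j}, \si, \dots, \si, 1_{H^j})$, and verifying that the relation $\psi^2_{H \otimes H} = id$ (used to show $\tau_n^{n+1} = id$, which is where symmetry was originally invoked in \cite{pl1hha}) is in fact the only place non-triviality of the braiding could cause trouble. However, since by the discussion in the introduction nowhere in the proofs of \cite{pl1hha} is more than the relation $\psi^2_{H \otimes H} = id$ used for the pair $(H, H)$ once $M = I$, the verification that $(C^\bullet_H, \widetilde{\delta_i}, \widetilde{\sigma_i}, \widetilde{\tau_n})$ satisfies the cocyclic identities is formally identical to the proof of Theorem~7.1 in \cite{pl1hha}, and I would simply invoke that, having first checked the reduction of hypotheses and the explicit form of the structure maps above. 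Thus the proof is: specialize Theorem~\ref{BHCfortriplesinnonsymm}, simplify the hypotheses using the unit coherence of the braiding, compute the induced structure maps, and cite the proof of Theorem~7.1 in \cite{pl1hha} for the cocyclic axioms.
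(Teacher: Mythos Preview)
Your proposal is correct and follows essentially the same approach as the paper: the paper also obtains this theorem by specializing Theorem~\ref{BHCfortriplesinnonsymm} to $M=I$ and $C=H$, and its proof is literally the single line ``As in the proof of Theorem~7.1 in \cite{pl1hha}.'' Your write-up is in fact more detailed than the paper's own argument, spelling out the reduction of hypotheses and the identification of the structure maps, whereas the paper simply cites \cite{pl1hha}.
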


\begin{proof}
As in the proof of Theorem 7.1 in \cite{pl1hha}.
\end{proof}

We will denote the braided Hopf cyclic cohomology of a braided Hopf algebra $H$ in Theorem \ref{brdd ver of CM thry in nonsymmetric} by $BHC^*(H)$, as we denote the usual Hopf cyclic cohomology of a Hopf algebra $H$ by $HC^*(H)$\newline

\section{Examples} \label{anyoncat} 

In this section we provide examples of non-symmetric categories (anyonic vector spaces, \cite{maj2,maj}) in which many objects $A$ with the property $\psi^2_{A\otimes A}=id_{A\otimes A}$ exist. In fact we prove that, more generally, there are many objects $A$ and $B$ for which $\psi_{B \otimes A}\psi_{A \otimes B}=id_{A \otimes B}$.

We need to recall that, \cite{bn,maj2,maj}, if $(H,\,R =R_1 \otimes R_2)$ is a quasitriangular Hopf algebra and $\mathcal{C}$ the category of all left $H$-modules, then $\mathcal{C}$ is a braided monoidal abelian category. Here the monoidal  structure is defined by 
\[h \rhd (v \otimes w)=h^{(1)}\rhd v\otimes h^{(2)} \rhd w , \nonumber \] 
and the braiding map $ \psi_{V \otimes W}$ by
\[\psi_{V \otimes W}(v \otimes w):= (R_2 \rhd w \otimes R_1 \rhd  v) , \label{siforHmod}\]  
for any $V$ and $W$ in $\mathcal{C}$, where $\rhd$ denotes the action of
$H$. \newline

Let $H=\mathbb{C} \mathbb{Z}_n$, the  group (Hopf) algebra of the finite cyclic group, $\mathbb{Z}_n$, of order $n$. In addition to the trivial one, $R=1\otimes 1$, there exists a nontrivial quasitriangular structure for $H=\mathbb{C} \mathbb{Z}_n$ defined by \cite{maj}:
\[R=(1/n)\sum_{a,b=0}^{n-1} e^{\frac{(-2 \pi iab)}{n}} g^a \otimes g^b,  \label{Rforczn}\] where $g$ is the generator of $\mathbb{Z}_n$. The category of all left $H$-modules, denoted here by $\mathcal{C}$, is known as the category of anyonic vector spaces. The objects of $\mathcal{C}$ are of the form $V=\bigoplus_{i=0}^{n-1}\, V_i$. They are $\mathbb{Z}_n$-graded representations of $\mathbb{C} \mathbb{Z}_n$ and the action of $\mathbb{Z}_n$ on $V$ is given by, 
\[g\rhd v= e^\frac{2\pi i |v|}{n} v, \label{actofgonv} \] 
where $|v|=k$ is the degree of the homogeneous elements $v$ in $V_k$. The morphisms of $\mathcal{C}$ are linear maps that preserve the grading. Applying formulas \eqref{Rforczn} and \eqref{actofgonv} to \eqref{siforHmod} will give the formula for the braiding map in $\mathcal{C}$ as:
\[\psi_{V \otimes W} (v \otimes w) = e^\frac{2\pi i |v||w|}{n}\,\, w\otimes v, \label{anyonbraid}\]
where $|v|$ and $|w|$ are the degrees of homogeneous elements $v$ and $w$ in objects $V$ and $W$, respectively. 

This category is not symmetric when $n>2$. But we will now prove that for many values of $n$ one can always find objects $A$ and $B$ in $\mathcal{C}$ such that, $\psi_{B \otimes A}\psi_{A \otimes B}=id_{A \otimes B}$. 

A set of examples is as follows. Let $n=2m^2$, for some integer $m\geq 2$, and let $A=\bigoplus_{i=0}^{n-1}\, A_i$ and $B=\bigoplus_{i=0}^{n-1}\, B_i$ be  objects which are focused only in degrees, $km$, for integers $k\geq 0$, i.e., degrees zero, $m$, $2m$, $3m$, $4m$ and so on. By $A$ being focused only in these degrees we mean, $A_i = 0$ when $i\neq km$, for integers $k\geq 0$. Thus for any two homogeneous elements $x$ in $A$ and $y$ in $B$, 
\[\frac{2\pi i |x||y|}{2m^2} = kl\pi i  \nonumber\]
when, $|x|=km$ and $|y|=lm$, for integers $k,l \geq 0$. This implies that, by formula \eqref{anyonbraid}, $\psi_{A \otimes B} (x \otimes y)$ is equal to either $y\otimes x$ or $-y\otimes x$. Therefore  $\psi_{B \otimes A}\psi_{A \otimes B} (x \otimes y) = x \otimes y$, for all homogeneous elements $x$ in $A$ and $y$ in $B$. 

An example of above case is when $n=18$ and $A=\bigoplus_{i=0}^{17}\, A_i$ and $B=\bigoplus_{i=0}^{17}\, B_i$ are objects focused only in degrees $0$, $3$, $6$, $9$, $12$ and $15$. 

For another set of examples let $n=m^2$ and let $A=\bigoplus_{i=0}^{n-1}\, A_i$ and $B=\bigoplus_{i=0}^{17}\, B_i$ be objects which are focused only in degrees, $km$, for integers $k\geq 0$. Thus for any two homogeneous elements $x$ in $A$ and $y$ in $B$, 
\[\frac{2\pi i |x||y|}{m^2} = 2kl\pi i \nonumber \]
when, $|x|=km$ and $|y|=lm$, for integers $k,l \geq 0$. This implies that, by formula \eqref{anyonbraid}, $\psi_{A \otimes B} (x \otimes y)=y\otimes x$, for all homogeneous elements $x$ in $A$ and $y$ in $B$. Therefore  $\psi_{B \otimes A}\psi_{A \otimes B} (x \otimes y) = x \otimes y$, for all homogeneous elements $x$ in $A$ and $y$ in $B$. An example of this case is when $n=9$ and $A=\bigoplus_{i=0}^{8}\, A_i$ and $B=\bigoplus_{i=0}^{17}\, B_i$ are objects focused only in degrees $0$, $3$ and $6$. \newline 

Relative to examples provided above a somewhat trivial case is when objects of interest, $A=\bigoplus_{i=0}^{n-1}\, A_i$, are focused only in degree zero, i.e., $A_i = 0$ when $i\neq 0$. But for us this case is still interesting because we have the following example which is a concrete example of a braided Hopf algebra $\underline{H}$ in a non-symmetric category $\mathcal{C}$ for which $\psi^2_{\underline{H} \otimes\underline{H}}=id$.

For the following example we recall that \cite{bn,maj}, referring to what we have recalled in the beginning of this section, $H$ itself could be turned into a braided Hopf algebra $\underline{H}$ in $\mathcal{C}$ the category of left $H$-modules. This braided Hopf algebra $\underline{H}$ has the following structure. As an algebra $\underline{H} = H$, with $H$-module structure given by conjugation,
\[ a \rhd h = a^{(1)}h S(a^{(2)}). \label{actforH} \nonumber \]
The rest of Hopf algebra structure on $\underline{H}$ is given by:
\[ \underline{\Delta} (h)= h^{(\underline{1})} \ot h^{(\underline{2})}= 
h^{(1)}S(R_2) \ot R_1 \rhd h^{(2)}, \label{delforH}\]
with counit $\underline{\varepsilon} = \varepsilon$, and antipode
\[\underline{S} (h)= R_2S(R_1\rhd h). \label{SforH} \]

\begin{example} \label{czn}
Let $H=\mathbb{C} \mathbb{Z}_n$ with the nontrivial quasitriangular structure defined by the formula \eqref{Rforczn}. Notice that, since $\Delta(g)=g\otimes g$ and $S(g)=g^{-1}$, the conjugation action of $g$ on all elements of $\underline{H}=\mathbb{C} \mathbb{Z}_n$ is trivial, i.e., $g^a \rhd g^m = g^ag^mg^{-a}=g^m$, for all integers $a$ and $m$. Then using formulas \eqref{siforHmod} and \eqref{Rforczn} we have,  

\begin{eqnarray*}
\psi_{\underline{H} \otimes\underline{H}} (v \otimes w) &=&  
(1/n)\sum_{a,b=0}^{n-1} e^{(-2 \pi iab)/n} g^b \rhd w \otimes g^a \rhd v  \\ 
&=& \big((1/n)\sum_{a,b=0}^{n-1} e^{(-2 \pi iab)/n} \big) w \otimes v =w\otimes v,
\end{eqnarray*}
for all $v$ and $w$ in $\underline{H}$. Here we have used the fact that $\big((1/n)\sum_{a,b=0}^{n-1} e^{(-2 \pi iab)/n} \big)=1$. Therefore $\psi_{\underline{H} \otimes\underline{H}}$ is the usual flip and $\psi^2_{\underline{H} \otimes\underline{H}}=id$.
Also this, in light of formula \eqref{actofgonv}, means that $\underline{H}$ as a $\mathbb{Z}_n$-graded Hopf algebra in $\mathcal{C}$,  is focused only in degree zero. 
\end{example}

Now we prove that in fact in this particular case, $(\underline{H}, \underline{\Delta}, \underline{S})= (H, \Delta, S)$. By formulas \eqref{Rforczn} and \eqref{delforH} we have:

\begin{eqnarray*}
\underline{\Delta} (h) &=&(1/n)\sum_{a,b=0}^{n-1} e^{(-2 \pi iab)/n}  h^{(1)}g^{-b} \otimes h^{(2)}\\
&=& \sum_{b=0}^{n-1}\big( (1/n)\sum_{a=0}^{n-1} e^{(-2 \pi iab)/n}\big) h^{(1)}g^{-b} \otimes h^{(2)}= h^{(1)} \otimes h^{(2)}= \Delta(h), 
\end{eqnarray*}
for all $h$ in $\underline{H}$. Here we have used the fact that, \[\big((1/n)\sum_{a=0}^{n-1} e^{(-2 \pi iab)/n} \big)= \delta_{b,0},\label{fact}\]
which is equal to $1$ if $b=0$ and $0$ otherwise \cite{maj}. Also by formulas \eqref{Rforczn}, \eqref{SforH} and  \eqref{fact} we have, for all $h$ in $\underline{H}$:

\begin{eqnarray*}
\underline{S} (h) &=& (1/n)\sum_{a,b=0}^{n-1} e^{(-2 \pi iab)/n} g^bS(h)= \sum_{b=0}^{n-1}\big( (1/n)\sum_{a=0}^{n-1} e^{(-2 \pi iab)/n}\big) g^b S(h)\\
&=& S(h),
\end{eqnarray*}

What we have shown implies that $BHC^*(\underline{H})=HC^*(H)$, i.e., the braided Hopf cyclic cohomology (defied in Theorem \ref{brdd ver of CM thry in nonsymmetric}) coincides with the Hopf cyclic cohomology for $\underline{H}=H=\mathbb{C} \mathbb{Z}_n$. This was expected as $\mathbb{C} \mathbb{Z}_n$ is also a Hopf algebra in the usual sense, i.e., within  the category of vector spaces. The cohomology $HC^*(\mathbb{C} \mathbb{Z}_n)$ is well known to be $\mathbb{C}$ in even degrees and zero in odd degrees. \newline 

We refer to \cite{pl5} for further examples of braided Hopf algebras and computing their braided Hopf cyclic cohomology (BHC) in symmetric and non-symmetric categories. 



\begin{thebibliography}{}

\bibitem{bn}
D. Bulacu, and E. Nauwelaerts, {Radford's biproduct for quasi-Hopf algebras and bosonization}, 
\emph{J. Pure Appl. Algebra}  \textbf{174} (2002), no. 1, 1--42.

\bibitem{cm2} 
A. Connes, and H. Moscovici, {Hopf algebras, cyclic cohomology and the transverse index theorem}, 
\emph{Comm. Math. Phys.} \textbf{198} (1998), no. 1, 199--246.

\bibitem{cm3}
A. Connes, and H. Moscovici, {Cyclic cohomology and Hopf algebras}, 
\emph{Lett. Math. Phys.} \textbf{48} (1999),  no. 1, 97--108.

\bibitem{cm4}
A. Connes, and   H. Moscovici, {Cyclic cohomology and Hopf algebra symmetry}, 
\emph{Conference Moshé Flato 1999 (Dijon), Lett. Math. Phys.} \textbf{52} (2000), no. 1, 1--28.

\bibitem{hkrs1}
P. M. Hajac, M. Khalkhali, B. Rangipour, and Y. Sommerh\"auser, {Stable anti-Yetter-Drinfeld modules}, 
\emph{C. R. Math. Acad. Sci. Paris} \textbf{338}  (2004),  no. 8, 587--590.

\bibitem{hkrs2}
P. M. Hajac, M. Khalkhali, B. Rangipour, and Y. Sommerh\"auser, {Hopf-cyclic homology and cohomology with coefficients}, 
\emph{C. R. Math. Acad. Sci. Paris} \textbf{338}  (2004), no. 9, 667--672.

\bibitem{pl1hha}
M. Khalkhali, and A. Pourkia, {Hopf Cyclic Cohomology in Braided Monoidal Categories}, \emph{Homology, Homotopy and Applications (HHA)}, \textbf{Vol. 12} (2010), no. 1, 111-155.

\bibitem{l}
J. L.  Loday, \emph{Cyclic homology}, Second edition, 
Grundlehren der Mathematischen Wissenschaften [Fundamental Principles of Mathematical Sciences], 301,  
Springer-Verlag, Berlin, 1998. 

\bibitem{ml}
S. Mac Lane, \emph{Categories for the working mathematician}, Second edition, 
Graduate Texts in Mathematics, 5, Springer-Verlag, New York, 1998.

\bibitem{maj2}
S. Majid, {Algebras and Hopf algebras in braided categories}, 
\emph{Advances in Hopf algebras}, 55--105, 
\emph{Lecture Notes in Pure and Appl. Math.} \textbf{158}, Dekker, New York, 1994.

\bibitem{maj}
S. Majid, \emph{Foundations of quantum group theory}, 
Cambridge University Press, Cambridge, 1995.

\bibitem{pl5}
A. Pourkia, {Braided Hopf Cyclic Cohomology}, to be posted. 


\end{thebibliography}
\end{document}